\documentclass[a4paper]{amsart}
\usepackage{amsmath, amssymb, amsfonts, amscd}
\theoremstyle{plain}
 \newtheorem{thm}{Theorem}
 
 \newtheorem{lem}[thm]{Lemma}
 
 \newtheorem{prob}[thm]{Problem}
\theoremstyle{definition}
 
 \newtheorem{exmp}[thm]{Example}
\theoremstyle{remark}

\title{On convergence of basic hypergeometric series}
\author{Toshio Oshima}
\address{Faculty of Science, Josai University,
1-1 Keyakidai, Sakado, Saitama 350-0295, Japan}
\email{t-oshima@josai.ac.jp}
\thanks{{\sl 2010 Mathematics Subject Classification.} 
Primary 33D15; Secondary 40A05, 11J71.\\
\hspace*{12pt}Supported by Grant-in-Aid for Scientific Researches (B), 
No.\ 25287017, Japan Society of Promotion of Science}
\keywords{\textit{Basic hypergeometric series, Diophantine approximation, Distribution modulo one}}
\begin{document}
\begin{abstract}
We examine the convergence of $q$-hypergeometric series when $|q|=1$.
We give a condition so that the radius of the convergence is positive 
and get the radius.  We also show that 
the numbers $q$ with the positive radius of the convergence
are densely distributed in the unit circle of the complex plane of $q$
and so are those with the radius $0$.
\end{abstract}
\maketitle
\section{Introduction}
Basic hypergeometric series (cf.~\cite{GR}) with the base $q$ is defined by
\begin{equation*}
 {}_r\phi_s\left[\begin{matrix}a_1,a_2,\ldots,a_r\\b_1,\ldots,b_s\end{matrix};q,z\right]
 =\sum_{n=0}^\infty
 \frac{(a_1;q)_n(a_2;q)_n\cdots(a_r;q)_n}
 {(q;q)_n(b_1;q)_n\cdots(b_s;q)_n}\bigl((-1)^nq^{\frac{n(n-1)}2}\bigr)^{s+1-r}z^n,
\end{equation*}
where
\begin{equation*}
 (a;q)_n=\prod_{j=1}^n(1-aq^{j-1})
\end{equation*}
is the $q$-Pochhammer symbol. 
Here $a_1,\ldots,a_r$, $b_1,\ldots,b_s$ and $q$ are complex parameters.
In this paper we always assume
\begin{equation}\label{eq:As1}
 a_iq^n\ne1\text{ \ and \ } b_jq^n\ne1\qquad(i=1,\ldots,r,\ j=1,\ldots,s,\ n=0,1,2,\ldots)
\end{equation}
so that the factors $(a_i;q)_n$ and $(b_j;q)_n$ in the terms 
of the series are never zero.

Let $v_n$ be the terms of the series ${}_r\phi_s$ which contain $z^n$. 
Then we have
\begin{align*}
 \frac{v_{n+1}}{v_n}
 &=\frac{(1-a_1q^n)(1-a_2q^n)\cdots(1-a_rq^n)}
  {(1-q^{n+1})(1-b_1q^n)\cdots(1-b_sq^n)}(-q^n)^{1+s-r}z\\
 &=\frac{(a_1-q^{-n})(a_2-q^{-n})\cdots(a_r-q^{-n})}
  {(1-q^{-n-1})(b_1-q^{-n})\cdots(b_s-q^{-n})}\frac zq.
\end{align*}

If $0<|q|<1$, the radius of convergence of the series ${}_r\phi_s$ equals $\infty$ 
if $r\le s$ and equals $1$ if $r=s+1$.
If $|q|>1$ and 
\begin{equation}\label{eq:As2}
 a_1\cdots a_rb_1\cdots b_s\ne0,
\end{equation}
the radius of convergence of the series equals
\begin{equation}
 \frac{|b_1b_2\cdots b_sq|}{|a_1a_2\cdots a_r|}.
\end{equation}

In this paper we discuss the convergence of the series when $|q|=1$.
The convergence of ${}_2\phi_1$ is assumed in \cite{OS} but it is a subtle problem depending 
on the base
\begin{equation}
  q=e^{2\pi i\theta}.
\end{equation}
We assume that $\theta$ is not a rational number, namely, 
$\theta\in \mathbb R\setminus\mathbb Q$ so that $(q;q)_n$ never vanish
and we have the following theorem.
\begin{thm}\label{thm:1}
Retain the notation above and assume the conditions \eqref{eq:As1} and \eqref{eq:As2}.

{\rm i)\ } Assume that there exists a positive number $C$ such that
\[
  \left|\theta-\frac km\right|>\frac{C}{m^2}\qquad(\forall k\in\mathbb Z,\ m=1,2,3,\ldots).
\]
Then we have
\begin{equation}\label{eq:R2}
  \lim_{n\to\infty}\sqrt[n]{\bigl|\bigl(e^{2\pi i\theta};e^{2\pi i\theta}\bigr)_n\bigr|}=1.
\end{equation}

Suppose moreover that every parameter $a_i$ or $b_j$ has an absolute value different from \/ $1$
or equals $e^{2\pi i\alpha} q^\beta$ with suitable rational numbers 
$\alpha$ and $\beta$ which may depend on $a_i$ and $b_j$.
Then the radius of convergence of the series ${}_r\phi_s$ equals
\begin{equation}
 \frac
 {\max\{|b_1|,1\}\cdots\max\{|b_s|,1\}}
 {\max\{|a_1|,1\}\cdots\max\{|a_r|,1\}}.
\end{equation}

{\rm ii)\,} 
In general, we have
\begin{equation}\label{eq:max1}
 \varlimsup_{n\to\infty}\sqrt[n]{\bigl|(e^{2\pi i\theta};e^{2\pi i\theta})_n\bigr|}\le1
 \qquad(\forall\theta\in\mathbb R\setminus\mathbb Q).
\end{equation}
The set of irrational real numbers $\theta$ satisfying
\begin{equation}\label{eq:zero}
 \varliminf_{n\to\infty}\sqrt[n]{\bigl|(e^{2\pi i\theta};e^{2\pi i\theta})_n\bigr|}=0
\end{equation}
is dense in $\mathbb R$ and uncountable.
If $\theta$ satisfies \eqref{eq:zero} and the absolute value of any parameter 
$a_i$ or $b_j$ is not\/ $1$, the radius of convergence of the series ${}_r\phi_s$
equals\/ $0$.
\end{thm}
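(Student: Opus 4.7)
The plan rests on the identity $|1-e^{2\pi i j\theta}| = 2|\sin(\pi j\theta)|$ and on the classical mean value
\begin{equation*}
  \int_0^1 \log|2\sin(\pi x)|\,dx = 0.
\end{equation*}
Setting $\varphi(x):=\log|2\sin(\pi x)|$ and $q=e^{2\pi i\theta}$, one has $\tfrac1n\log|(q;q)_n| = \tfrac1n\sum_{j=1}^n \varphi(j\theta)$, so the three limit statements \eqref{eq:R2}, \eqref{eq:max1}, \eqref{eq:zero} reduce to controlling Birkhoff averages of $\varphi$ under the rotation $x\mapsto x+\theta$. Since $\varphi\le \log 2$ and $\varphi$ has only a logarithmic singularity at integers, everything comes down to how closely the orbit $\{j\theta\}$ can approach $0 \bmod 1$.

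For part (i) I couple the Diophantine bound $\|j\theta\|\ge C/j$ (equivalent to the stated hypothesis) with a truncation. Let $\varphi_M(x) := \max(\varphi(x),-M)$; this is continuous of total variation $O(M)$, so Koksma's inequality together with the discrepancy bound $D_n^* = O(\log n/n)$ valid for badly approximable $\theta$ yields
\begin{equation*}
 \Bigl|\tfrac1n\sum_{j=1}^n\varphi_M(j\theta) - \int_0^1\varphi_M\Bigr| = O(M\log n/n).
\end{equation*}
The tail $\varphi-\varphi_M$ is supported on $\{\|x\|<e^{-M}/(2\pi)\}$, and the Diophantine bound forces any $j\le n$ landing there to satisfy $j\gtrsim e^M$; counting these and using $|\varphi(j\theta)|\le \log j + O(1)$ bounds the tail contribution to $\tfrac1n\sum\varphi(j\theta)$ by $O(e^{-M}(M+\log n))$, while $\int_0^1|\varphi-\varphi_M| = O(Me^{-M})$. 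Calibrating $M = 2\log\log n$ drives every error to $0$, proving \eqref{eq:R2}. For the radius of convergence, $\tfrac1n\log|v_n/v_0|$ is $\log|z|$ plus the Birkhoff averages of $\log|1-cq^k|$ over $c\in\{a_i,b_j,q\}$. When $|c|\ne 1$ the function $w\mapsto \log|1-cw|$ is continuous on the unit circle with mean $\log\max(|c|,1)$ by Jensen, and Weyl equidistribution gives the limit directly. When $c=e^{2\pi i\alpha}q^\beta$ with rational $\alpha,\beta$ of common denominator $N$, the relation that $c^N$ equals an integer power of $q$ lets one regroup $\prod_{k=0}^{n-1}(1-cq^k)$ into $N$ shifted $(q;q)$-type products; the Diophantine hypothesis on $\theta$ transfers to these shifts and \eqref{eq:R2} applies.

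For part (ii), the bound \eqref{eq:max1} is immediate from $\varphi\le\varphi_M$ and Weyl applied to the continuous $\varphi_M$: $\varlimsup\tfrac1n\sum\varphi(j\theta) \le \int_0^1\varphi_M \to 0$ as $M\to\infty$. To realise \eqref{eq:zero} on a dense uncountable set, I build $\theta=[0;a_1,a_2,\ldots]$ via continued fractions, choosing partial quotients recursively so that $a_{k+1}\ge e^{q_k^2}$; then $\|q_k\theta\|<1/(a_{k+1}q_k)<e^{-q_k^2}$, and the single term $\varphi(q_k\theta)<-q_k^2$ overwhelms the trivial upper bound $(q_k-1)\log 2$ on the remaining terms, sending $\tfrac1{q_k}\sum_{j\le q_k}\varphi(j\theta)\to-\infty$ along the subsequence $n=q_k$. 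Density follows from prescribing finitely many initial partial quotients arbitrarily, uncountability from a binary choice $a_{k+1}\in\{N_k,N_k+1\}$ with large $N_k$ at each recursion step. Finally, under $|a_i|,|b_j|\ne 1$ the factors $|1-a_iq^n|$ and $|1-b_jq^n|$ stay uniformly bounded away from $0$ and from $\infty$, so $|v_n|\ge c^n|z|^n/|(q;q)_n|$ for some $c>0$, and \eqref{eq:zero} forces $\varlimsup|v_n|^{1/n}=+\infty$ for every $z\ne 0$, yielding radius $0$.

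The main obstacle is part (i): $\varphi$ is neither bounded nor of bounded variation, so neither Weyl's theorem nor the Denjoy--Koksma inequality applies directly, and the calibration of the truncation level $M(n)$ against the star-discrepancy and the Diophantine bound is precisely the step where the badly-approximable hypothesis is essential.
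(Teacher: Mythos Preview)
Your arguments for \eqref{eq:R2}, \eqref{eq:max1} and \eqref{eq:zero} are correct and take a genuinely different route from the paper's. For \eqref{eq:R2} the paper proves a standalone lemma (Lemma~\ref{lem:1}) extending Weyl equidistribution to periodic functions with integrable singularities at points $c_j=r_j\theta+s_j$ with $r_j,s_j\in\mathbb Q$: it orders the orbit points landing near each $c_j$ by distance and uses the gap bound $\|(k-k')\theta\|>C/n$ for $1\le k<k'\le n$ to show that the $\nu$-th closest lies at distance $>C\nu/(2n)$, so the singular contribution is dominated by a Riemann sum for $\int_0^\epsilon h$. Your truncation $\varphi_M$ combined with Koksma's inequality and the sharp discrepancy bound $D_n^*=O((\log n)/n)$ for badly approximable $\theta$ reaches the same conclusion by a more quantitative and arguably cleaner route. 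For part~(ii) the paper constructs $\theta=\sum_{n\ge1}1/a_n$ with $a_{n+1}\in\{2,3\}\cdot a_n\cdot a_n!$ rather than using continued fractions, and obtains density by adding rationals rather than by prescribing initial partial quotients; the two constructions are interchangeable. Your final radius-$0$ argument via the crude two-sided bounds on $|1-a_iq^k|$, $|1-b_jq^k|$ is also fine and matches the paper's use of \eqref{eq:R1}.

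The one step that does not go through as written is your treatment of parameters $c=e^{2\pi i\alpha}q^\beta$ with $\alpha,\beta\in\mathbb Q$. The factorisation you allude to runs the wrong way: with $Q=e^{2\pi i\theta/N}$ one has $\prod_{r=0}^{N-1}(cQ^r;q)_n=(c;Q)_{nN}$, so a \emph{product} of $N$ shifted Pochhammer symbols collapses to a single one, not conversely. Recovering the individual limit for $(c;q)_n$ from such identities would still require first proving $\varlimsup_n|(c';q)_n|^{1/n}\le1$ for every shifted factor $c'$ and then a squeeze argument, neither of which you supply. The clean fix is to bypass regrouping entirely and rerun your truncation--Koksma argument directly on $\tfrac1n\sum_{k=0}^{n-1}\varphi(k\theta+\tau)$ with $\tau=\alpha+\beta\theta$: shifting the orbit does not change the discrepancy, and the only new input is the shifted Diophantine bound $\min_{m\in\mathbb Z}|k\theta+\tau-m|\gtrsim 1/k$, which holds precisely because $\tau$ is a $\mathbb Q$-combination of $1$ and $\theta$ (this is the content of the paper's \eqref{eq:Est3}). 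That is exactly how the paper handles this case, applying Lemma~\ref{lem:1} with the singularity placed at $c_1\equiv-\tau\pmod1$.
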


Note that it is known that an irrational number $\theta$ satisfies the assumption in 
Theorem~\ref{thm:1} i) if and only if the positive integers appearing in its expansion of
continued fraction are bounded and hence the set of real numbers $\theta$ 
satisfying it is uncountable and dense in $\mathbb R$.

Suppose $\theta\in\mathbb R\setminus\mathbb Q$ satisfies the assumption in 
Theorem~\ref{thm:1} i).
Then 
\begin{equation}\label{eq:Est3}
 \left|\frac{k_1}{m_1}\theta-\frac{k_2}{m_2}-\frac{k}m\right|
 =\left|\frac{k_1}{m_1}\right|\cdot\left|\theta-\frac{m_1(k_2m+km_2)}{k_1m_2m}\right|
 >\frac{C}{k_1m_1m_2^2\cdot m^2}
\end{equation}
for integers $k,k_1,k_2,m,m_1,m_2$ with $k_1mm_1m_2\ne0$
and therefore the number $r_1\theta +r_2$ with $r_1\in\mathbb Q\setminus\{0\}$ and 
$r_2\in\mathbb Q$ also satisfies the assumption.
\begin{exmp}
The estimate
\begin{equation}\label{eq:sqrt2}
 \Bigl|\sqrt2-\dfrac km\Bigr|>\dfrac{1}{3m^2}\quad\qquad(\forall k\in\mathbb Z,\ m=1,2,3,\ldots)
\end{equation}
shows that the real number $\theta=\sqrt2+r$ with $r\in\mathbb Q$ satisfies the assumption 
in Theorem~\ref{thm:1} i).
\end{exmp}
We will prove \eqref{eq:sqrt2}. 
We assume the existence of integers $k$ and $m$ satisfying $m\ge1$ and
$|\sqrt 2-\frac km|\le \frac 1{3m^2}$. Then we may moreover assume $m\ge2$ and therefore
\begin{align*}
  1 &\le \bigl|2m^2 - k^2\bigr|=\bigl|\bigl(\sqrt2m-k\bigr)
      \bigl(\sqrt2m+k\bigr)\bigr|\\
    &=\left|m^2\Bigl(\sqrt 2 - \frac km\Bigr)
       \right|\cdot\left|2\sqrt2 -\Bigl(\sqrt2-\frac km\Bigr)
      \right|\\
    &\le \frac13\left(2\sqrt2 +\frac1{3m^2}\right)
     \le\frac{2\sqrt2}3+\frac1{9\cdot 4}=0.9705\cdots<1,
\end{align*}
which leads a contradiction.

We will show Theorem~\ref{thm:1} i) in \S\ref{S:Es1} and 
Theorem~\ref{thm:1} ii) in \S\ref{S:Es2}.

\section{Preliminary results}
First we review the following theorem which claims that 
$k\theta\mod \mathbb Z$ for
$k=1,2,\ldots\,$ are uniformly distributed on $\mathbb R/\mathbb Z$.
\begin{thm}[Bohl, Sierpi\'nski and Weil]\label{thm:Erg}
Let $f(x)$ be a periodic function on $\mathbb R$ with period\/ $1$.
If $f(x)$ is integrable in the sense of Riemann, then
\begin{equation}\label{eq:Erg}
 \lim_{n\to\infty}\frac1n\sum_{k=1}^nf(k\theta)=\int_0^1f(x)\,dx
\qquad(\forall \theta\in\mathbb R\setminus\mathbb Q).
\end{equation}
\end{thm}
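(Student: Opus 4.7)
The plan is to prove the equidistribution statement by the classical three-step reduction, going from exponentials to trigonometric polynomials to general Riemann integrable functions.

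First I would verify the identity for the characters $e_m(x):=e^{2\pi i m x}$ with $m\in\mathbb Z$. For $m=0$ both sides equal $1$. For $m\ne0$, since $\theta\notin\mathbb Q$ we have $e^{2\pi i m\theta}\ne1$, so the geometric sum gives
\begin{equation*}
\Bigl|\tfrac1n\sum_{k=1}^n e^{2\pi i m k\theta}\Bigr|
 =\frac1n\left|\frac{e^{2\pi i m\theta}\bigl(1-e^{2\pi i m n\theta}\bigr)}{1-e^{2\pi i m\theta}}\right|
 \le\frac{2}{n\bigl|1-e^{2\pi i m\theta}\bigr|}\longrightarrow0,
\end{equation*}
while $\int_0^1 e_m(x)\,dx=0$. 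So the identity \eqref{eq:Erg} holds for every character, and by linearity it holds for every trigonometric polynomial $P(x)=\sum_{|m|\le N}c_m e_m(x)$.

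Next I would pass to continuous $1$-periodic functions $g$ by the Weierstrass approximation theorem in trigonometric form: given $\varepsilon>0$, choose a trigonometric polynomial $P$ with $\sup_x|g(x)-P(x)|<\varepsilon$. Then
\begin{equation*}
\Bigl|\tfrac1n\sum_{k=1}^n g(k\theta)-\int_0^1 g\Bigr|
 \le 2\varepsilon+\Bigl|\tfrac1n\sum_{k=1}^n P(k\theta)-\int_0^1 P\Bigr|,
\end{equation*}
and the last term tends to $0$ by the previous step. Since $\varepsilon$ is arbitrary, \eqref{eq:Erg} holds for continuous periodic $g$.

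The main obstacle, and the last step, is the extension from continuous functions to Riemann integrable $f$; here one must exploit the very definition of Riemann integrability. For each $\varepsilon>0$ I would choose a partition fine enough that the corresponding lower and upper step functions $s_-\le f\le s_+$ satisfy $\int_0^1(s_+-s_-)\,dx<\varepsilon$, then smooth them into continuous $1$-periodic functions $g_-\le f\le g_+$ with $\int_0^1(g_+-g_-)\,dx<2\varepsilon$. Applying the continuous case to $g_\pm$ gives
\begin{equation*}
\int_0^1 g_-\le\varliminf_{n\to\infty}\tfrac1n\sum_{k=1}^n f(k\theta)
 \le\varlimsup_{n\to\infty}\tfrac1n\sum_{k=1}^n f(k\theta)\le\int_0^1 g_+,
\end{equation*}
and since $\int_0^1 g_\pm$ are both within $2\varepsilon$ of $\int_0^1 f$, letting $\varepsilon\to0$ yields \eqref{eq:Erg}. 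The delicate part is checking that every bounded Riemann integrable periodic function can indeed be sandwiched in this way by continuous periodic functions whose integrals differ by an arbitrarily small amount, but this is straightforward once one uses the characterization of Riemann integrability by the vanishing of the oscillation sum.
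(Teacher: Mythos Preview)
Your proof is correct and follows essentially the same approach the paper sketches: the paper reduces to finite Fourier series via the same geometric-sum computation you give for $e^{2\pi i m k\theta}$, citing \cite{AA} for the approximation argument. Your version simply fills in the details the paper omits, namely the Weierstrass trigonometric approximation for continuous $g$ and the sandwich by continuous minorant/majorant for general Riemann integrable $f$.
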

This theorem is proved by approximating $f(x)$ by a finite Fourier series
(cf.~\cite{AA}) since the theorem is directly proved if $f(x)$ is a finite 
Fourier series with the fact
\[
  \sum_{k=1}^n\frac{e^{2\pi im k\theta}}n
  =\frac{e^{2\pi im\theta}}n\left(\frac{1-e^{2\pi imn\theta}}{1-e^{2\pi im\theta}}\right)
\ \xrightarrow{n\to\infty}\ 0\qquad(m\ne0).
\]

We also prepare the following integral formula.
\begin{align}\label{eq:Int}
 \int_0^{1}\log\left|1-re^{2\pi ix}\right|dx=
 \begin{cases}
  0&(0\le r\le 1),\\
  \log r&(r\ge1).
 \end{cases}
\end{align}
The series $-\frac{\log(1-z)}z=1+\frac{z}2+\frac{z^2}3+\cdots$ converges
when $|z|<1$ and therefore
\[
  0=\int_{|z|=r}\frac{\log(1-z)}z\,dz=2\pi i\int_0^1\log(1-re^{2\pi ix})\,dx
  \qquad(0\le r<1,\ z=2\pi ix).
\]
by Cauchy's integral formula.
Since
\[
 \mathrm{Re}\int_0^1\log(1-re^{2\pi ix})\,dx=
 \int_0^1\mathrm{Re}\log(1-re^{2\pi ix})\,dx=
 \int_0^1\log|1-re^{2\pi ix}|\,dx,
\]
we have \eqref{eq:Int} when $0\le r<1$.
Moreover the relation
\[
 \log|1-re^{2\pi ix}|=\log r+\log|r^{-1}-e^{2\pi ix}|
 =\log r+\log|1-r^{-1}e^{2\pi ix}|
\]
assures \eqref{eq:Int} when $r>1$.

Note that the expansion $e^\sigma-1=\sigma(1+\frac{\sigma}{2!}+\frac{\sigma^2}{3!}+\cdots)$
assures $|1-e^\sigma|\ge \frac{|\sigma|}2$ when $|\sigma|<1$.
Hence if $r=1$, the improper integration in \eqref{eq:Int} converges
because 
\begin{equation}\label{eq:est1-e}
\bigl|\log|1-e^{2\pi iz}|\bigr|> \bigl|\log |\pi z|\bigr|
\text{ \ for \ }0<|2\pi z|<1
\end{equation}
and we obtain \eqref{eq:Int} by taking the limit $r\to 1-0$
(cf.~\cite[5.3.5]{Ah}).

\section{A lemma}
We prepare a lemma to prove Theorem~\ref{thm:1} i).
\begin{lem}\label{lem:1}
Let $f(x)$ be a periodic function on $\mathbb R$ with period\/ $1$.
Suppose that $f(x)$ is continuous on $[0,1]$ except for finite points
$c_1,\ldots,c_p\in[0,1)$.
Suppose there exist $r_j\in \mathbb Q$ for $j=1,\ldots,p$ such that
\begin{equation}\label{eq:cj}
   c_j-r_j\theta\in\mathbb Q\text{ \ and \ }
  (r_j+k)\theta-c_j\not\in\mathbb Z\text{ \ for \ }k=1,2,\ldots.
\end{equation}
Suppose moreover that there exist a positive number $\epsilon$ and a
continuous function $h(t)$ on $(0,1]$ such that
\begin{equation}\label{eq:Integrable}
 \begin{split}
  |f(x)|&<h(|x-c_j|) \quad\text{for \ }0<|x-c_j|<\epsilon,\\
  \int_0^1h(t)\,dt&<\infty\text{ \ and \ }h(t_1)\ge h(t_2)\ge 0
  \text{ \ if \ }0<t_1<t_2\le 1.
 \end{split}
\end{equation}
If $\theta\in\mathbb R\setminus\mathbb Q$ satisfies the assumption in 
Theorem~\ref{thm:1} i), then
\eqref{eq:Erg} is valid.
Here we note that the condition \eqref{eq:Integrable} assures that the improper integral 
in \eqref{eq:Erg} converges.
\end{lem}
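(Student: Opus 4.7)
\textit{Proof strategy.}
The plan is to split $f$ into a Riemann integrable part and a singular part concentrated near the $c_j$'s, apply Theorem~\ref{thm:Erg} to the former, and bound the latter by combining the Diophantine estimate \eqref{eq:Est3} with the integrability of $h$. Fix $\delta\in(0,\epsilon)$ small enough that the intervals $I_j=(c_j-\delta,c_j+\delta)$ are pairwise disjoint, and write $f=f_\delta+g_\delta$ with $f_\delta=f\cdot\chi_{[0,1]\setminus\bigcup_j I_j}$. Then $f_\delta$ is bounded with only finitely many discontinuities, so Theorem~\ref{thm:Erg} gives $\frac1n\sum_{k=1}^n f_\delta(k\theta)\to\int_0^1 f_\delta(x)\,dx$. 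The pointwise bound $|f|<h(|\,\cdot-c_j|)$ on $I_j$ yields $|\int_0^1 f-\int_0^1 f_\delta|\le 2p\int_0^\delta h(t)\,dt$, which tends to $0$ as $\delta\to0$. Everything therefore reduces to showing that $\frac1n\sum_{k=1}^n|g_\delta(k\theta)|$ can be made arbitrarily small by choosing $\delta$ small (uniformly in large $n$), where we set $\delta_k^{(j)}:=\min_{N\in\mathbb Z}|k\theta-c_j-N|$ and use $|g_\delta(k\theta)|\le\sum_j h(\delta_k^{(j)})\chi_{\delta_k^{(j)}<\delta}$.

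The Diophantine hypothesis on $\theta$, equivalent to $|m\theta-n|>C/|m|$ for some $C>0$ and all integers $n$ and nonzero integers $m$, supplies two ingredients. First, for distinct $k_1,k_2\in\{1,\dots,n\}$ the orbit points $k_1\theta,k_2\theta$ are separated modulo $\mathbb Z$ by at least $C/n$, so every arc of length $L$ on $\mathbb R/\mathbb Z$ contains at most $nL/C+1$ of the points $k\theta\bmod 1$ with $k\le n$. Hence
\[
N^{(j)}(t):=\#\{k\le n:\delta_k^{(j)}<t\}\ \le\ 2nt/C+1\qquad(t\le\delta).
\]
Second, since $c_j=r_j\theta+q_j$ with $r_j,q_j\in\mathbb Q$, the very same computation that establishes \eqref{eq:Est3} produces constants $C^{(j)}>0$ with $\delta_k^{(j)}\ge C^{(j)}/k$ for every $k\ge 1$; the clause $(r_j+k)\theta-c_j\notin\mathbb Z$ of the hypothesis guarantees $\delta_k^{(j)}>0$ in the borderline case $k=r_j$. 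Consequently $N^{(j)}(t)=0$ for $t\le C^{(j)}/n$.

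Combining these bounds via Stieltjes integration by parts,
\[
\sum_{k=1}^n h(\delta_k^{(j)})\chi_{\delta_k^{(j)}<\delta}
= h(\delta)\,N^{(j)}(\delta)+\int_{C^{(j)}/n}^{\delta}N^{(j)}(t)\,(-dh(t)),
\]
and evaluating $\int_{C^{(j)}/n}^\delta t\,(-dh(t))=[-t\,h(t)]_{C^{(j)}/n}^{\delta}+\int_{C^{(j)}/n}^\delta h(t)\,dt$, the $\delta h(\delta)$ contributions cancel and the right side reduces to $(2n/C)\int_{C^{(j)}/n}^\delta h(t)\,dt+O\bigl(h(C^{(j)}/n)\bigr)$. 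Since $h$ is decreasing and integrable, $t\,h(t)\to0$ as $t\to0^+$, which forces the $O(\,\cdot\,)$ remainder to be $o(n)$. Dividing by $n$ and letting $n\to\infty$,
\[
\limsup_{n\to\infty}\frac1n\sum_{k=1}^n h(\delta_k^{(j)})\chi_{\delta_k^{(j)}<\delta}\ \le\ \frac{2}{C}\int_0^\delta h(t)\,dt,
\]
which tends to $0$ as $\delta\to 0$. Assembling the three estimates---first $n\to\infty$, then $\delta\to 0$---proves \eqref{eq:Erg}. The crux is the last integration by parts: one must simultaneously exploit the pairwise orbital separation (to control $N^{(j)}$) and the integrability of $h$ (to absorb its divergence near the $c_j$'s) in order to trade the singular sum for the finite integral $\int_0^\delta h$.
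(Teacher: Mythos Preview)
Your proof is correct and follows essentially the same strategy as the paper's: split off the bounded part away from the $c_j$ (handled by equidistribution), and for the singular part use the Diophantine hypothesis to obtain both the pairwise orbital separation $>C/n$ and the individual lower bound $\delta_k^{(j)}\gtrsim 1/k$, then convert the sum $\sum h(\delta_k^{(j)})$ into the integral $\int_0^\delta h$. The only cosmetic difference is in the final bookkeeping: the paper orders the nearby orbit points by their distance to $c_j$ and bounds the sum by a Riemann sum $\frac1n\sum_\nu h\bigl(\tfrac{C\nu}{2n}\bigr)$, whereas you package the same information via the counting function $N^{(j)}(t)$ and Stieltjes integration by parts.
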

\begin{proof}
Put
\begin{align*}
 J(j,n,\epsilon)=\bigl\{k\mid 1\le k\le n,\ 
 \min_{m\in\mathbb Z}\{|k\theta-c_j-m|\}<\epsilon\bigr\}
\intertext{and}
 I_\epsilon=\bigl\{x\in[0,1]\mid \min_{m\in\mathbb Z}|x-c_j-m|\ge\epsilon 
\text{ \ for \ }j=1,\ldots,p\bigr\}.
\end{align*}
Then Theorem~\ref{thm:Erg} shows that
\[
 \lim_{n\to\infty}\frac1n
 \sum_{\substack{k\not\in J(1,n,\epsilon)\cup\cdots\cup J(p,m,\epsilon)\\ 1\le k\le n}}f(k\theta)
 =\int_{I_\epsilon}f(x)\,dx
\]
and therefore we have only to show
\begin{equation}\label{eq:ToProve}
 \lim_{\epsilon\to+0}
 \lim_{n\to\infty}\frac1n\sum_{k\in J(j,n,\epsilon)}|f(k\theta)|=0
\end{equation}
to get this lemma.

Fix $j$.
Since Theorem~~\ref{thm:Erg} shows that
\[
 \lim_{n\to\infty}\frac1n\#J(j,n,\epsilon)=2\epsilon,
\]
there exists a positive integer $N_\epsilon$ such that
\[
 \#J(j,n,\epsilon)\le 3\epsilon n\quad(\forall n\ge N_\epsilon).
\]
Put
\[
 J(j,\epsilon,n)=\{k_1,k_2,\ldots,k_L\}
\]
with $L=\#J(j,\epsilon,n)$ so that
\[
 \min_{m\in\mathbb Z}|k_\nu\theta-c_j-m|\le \min_{m\in\mathbb Z}|k_{\nu'}\theta-c_j-m|
 \text{ \ if \ }1\le\nu<\nu'\le L.
\]

Note that $c_j=\frac{k_1}{m_1}\theta+\frac{k_2}{m_2}$ with integers 
$k_1,k_2,m_1,m_2$.
In view of \eqref{eq:Est3}, we have
\[
 \left|k\theta-\frac{k_1}{m_1}\theta-\frac{k_2}{m_2}-m\right|
 =\left|\frac{m_1k-k_1}{m_1}\theta-\frac{k_2+mm_2}{m_2}\right|>
  \frac{C}{m_1m_2^2|mk-k_1|}
\]
for $k=1,2,3,\ldots$ satisfying $mk\ne k_1$.
If $\frac{k_1}{m_1}$ is a positive integer, the assumption implies
$\frac{k_2}{m_2}\not\in\mathbb Z$.
Hence replacing $C$ by a small positive number if necessary, we may assume
\[
 \min_{m\in\mathbb Z}\left|k\theta -c_j-m\right|>\frac Ck\quad(k=1,2,\ldots,\ j=0,1,\ldots,p),
\]
where we put $c_0=0$.  In particular, we have
\[
 \min_{m\in\mathbb Z}|k\theta - k'\theta-m|>\frac C{k'-k}\ge\frac Cn\quad(0\le k<k'\le n).
\]
Thus we have
\[
 \min_{m\in\mathbb Z}\left|k_\nu\theta -c_j-m\right|>\frac{C\nu}{2n}\quad(1\le\nu\le L)
\]
and
\[
  |f(k_\nu\theta)|<h
  \bigl(\tfrac{C\nu}{2n}\bigr)\quad(1\le\nu\le L<3\epsilon n).
\]
Hence if $n\ge N_\epsilon$, we have
\begin{align*}
 \frac1n\sum_{k\in J(j,n,\epsilon)}|f(k\theta)|
 &=\frac1n\sum_{\nu=1}^L|f(k_\nu\theta)|
 \le \frac1n\sum_{\nu=1}^{[3\epsilon n]}h(\tfrac{C\nu}{2n})\\
 &\le \int_0^{\frac{[3\epsilon n]}n}h\bigl(\tfrac{Cx}2\bigr)\,dx
 \le \frac 2C\int_0^{\frac{6\epsilon} C}h(t)\,dt,
\end{align*}
which implies \eqref{eq:ToProve}.
Here $[3\epsilon n]$ denotes the largest integer satisfying $[3\epsilon n]\le 3\epsilon n$.
\end{proof}
\begin{prob} 
Let $f(x)$ be a function satisfying the assumption in Lemma~\ref{lem:1}.
Is the equality \eqref{eq:Erg} for $\theta\in\mathbb R$ valid almost everywhere
in the sense of Lebesgue measure?
Here we may assume $p=1$, $c_1=0$ and $h(t)=|\log t|$ for our problem.
Is it also valid without assuming the condition \eqref{eq:cj}?
\end{prob}
\section{Estimate I}\label{S:Es1}
Let $a=re^{2\pi i\tau}$ and $q=e^{2\pi i\theta}$ with $\tau,\,\theta\in\mathbb R$ and $r>0$.
Then
\begin{equation}\label{eq:Poch}
  \sqrt[n]{\bigl|(a;q)_n\bigr|}
  =\exp\left(\frac1n\sum_{k=0}^{n-1}\log\bigl|1-re^{2\pi i(k\theta+\tau)}\bigr|\right).
\end{equation}
If $r\ne1$, Theorem~\ref{thm:Erg} and \eqref{eq:Int} imply
\begin{equation}\label{eq:R1}
  \lim_{n\to\infty}\sqrt[n]{\bigl|(a;q)_n\bigr|}=
  \max\{|a|,1\}\qquad(|a|\ne 1,\ q=e^{2\pi i\theta}\text{\ \ with\ \ }
  \theta\in\mathbb R\setminus\mathbb Q).
\end{equation}

Assume $r=1$.
Since
\begin{align*}
 \sum_{\substack{\min_{m\in\mathbb Z}\{|k\theta-m|\}<\epsilon\\ 1\le k\le n}}
 \log\bigl|1-e^{2\pi ik\theta}\bigr|&\le 0
\intertext{and}
 \lim_{n\to\infty}
 \frac1n\sum_{\substack{\min_{m\in\mathbb Z}\{|k\theta-m|\}\ge\epsilon\\ 1\le k\le n}}
 \log\bigl|1-e^{2\pi ik\theta}\bigr|&=\int_\epsilon^{1-\epsilon}\log|1-e^{2\pi ix}|\,dx
\end{align*}
for any small positive number $\epsilon$, 
we have \eqref{eq:max1} in view of \eqref{eq:Int} and \eqref{eq:Poch}.

Now assume moreover that $\theta$ satisfies the assumption in Theorem~\ref{thm:1} i).
Then Lemma~\ref{lem:1} with $f(x)=\log|1-e^{2\pi ix}|$ and $h(t)=|\log\pi t|$ 
(cf.~\eqref{eq:est1-e}) proves \eqref{eq:R2}.

Suppose $\tau=\frac{k_1}{m_1}\theta+\frac{k_2}{m_2}$ with integers
$k_1,k_2,m_1,m_2$ with $m_1>0$, $m_2>0$ and
\[
 \left(\frac{k_1}{m_1}+k-1\right)\theta+\frac{k_2}{m_2}\notin\mathbb Z
 \qquad(k=1,2,3,\ldots)
\]
corresponding to \eqref{eq:As1}, \eqref{eq:As2} and \eqref{eq:cj}.
Lemma~\ref{lem:1} with $f(x)=\log\bigl|1-e^{2\pi i(x+(\frac{k_1}{m_1}-1)
\theta)}\bigr|$, $h(t)=|\log\pi t|$
and $c_j=-(\frac{k_1}{m_1}-1)\theta-\frac{k_2}{m_2}$ implies
\begin{equation}\label{eq:R3}
 \lim_{n\to\infty}\sqrt[n]{\bigl|\bigl(e^{2\pi i\tau};e^{2\pi i\theta}\bigr)_n\bigr|}=1.
\end{equation}
Thus we have Theorem~\ref{thm:1} i) by the estimates \eqref{eq:R1}, \eqref{eq:R2} 
and \eqref{eq:R3}.
\section{Estimate II}\label{S:Es2}
Define a series of rapidly increasing positive integers $\{a_n\}$ by
\begin{equation}
 a_1=2,\quad a_{n+1}=k_{n+1}\cdot a_n\cdot a_n!\qquad(k_{n+1}=2\text{\ or\ }3,\ n=1,2,3,\ldots)
\end{equation}
and put $\theta=\displaystyle\sum_{n=1}^\infty\frac1{a_n}$ and $q=e^{2\pi i\theta}$. 
Then $\theta\not\in\mathbb Q$ and we have
\begin{gather}
 \min_{m\in\mathbb Z}\bigl|a_n\cdot\theta-m\bigr|<\sum_{k=n+1}^\infty\frac{a_n}{a_k}<\frac1{a_n!},
 \label{eq:small}\\
 \bigl|1-e^{2\pi ia_n\theta}\bigr|\le\frac{2\pi}{a_n!},\quad
 \prod_{j=1}^{a_n}\bigl|1-q^j\bigr|\le\frac{2^{a_n}\pi}{a_n!},\quad
 \lim_{n\to\infty}\sqrt[a_n]{\prod_{j=1}^{a_n}\bigl|1-q^j\bigr|}=0\notag
\end{gather}
and \eqref{eq:zero} in Theorem~\ref{thm:1} ii).
We may choose $k_n\in\{2,3\}$ for $n=1,2,\ldots$, 
we get uncountably many $\theta$'s.
Moreover if we put $\theta=\sum_{n=1}^\infty\frac 1{a_n}+r$ so that
there exists a positive integer $N$ satisfying $rN\in\mathbb Z$, 
then $\theta$ also satisfies \eqref{eq:small} for $n\ge N$ and 
hence $\theta$ satisfies  \eqref{eq:zero}.

The remaining claim in Theorem~\ref{thm:1} ii) is clear
from \eqref{eq:R1}.

\end{document}